\documentclass[11pt]{amsart}
\usepackage[margin=3cm]{geometry}
\usepackage{mathrsfs,amsmath,amsfonts,amssymb,amsthm}
\usepackage{enumitem}
\usepackage{graphicx}
\usepackage[dvipsnames,svgnames,table]{xcolor}
\usepackage[linktocpage=true,colorlinks=true,linkcolor=RubineRed!85!black,citecolor=ForestGreen,urlcolor=green,pdfborder={0 0 0}]{hyperref}
\usepackage{dsfont}

\theoremstyle{plain}
\newtheorem{theorem}{Theorem}[section]

\newtheorem{proposition}[theorem]{Proposition}
\newtheorem{lemma}[theorem]{Lemma}
\theoremstyle{definition}

\numberwithin{equation}{section}

\newcommand*{\dif}{\mathop{}\!\mathrm{d}}
\def \R {\mathbb{R}}
\def \Sp {\mathbb{S}}
\def \HH {\mathrm{H}}

\def \II {\mathrm{I}}
\def \LL {L^{ij}_k}
\def \bb {\mathbf{b}}
\def \BB {\mathbf{B}^{ij}_k}
\def \ee {\mathbf{e}}

\def \ff {\mathbf{f}}
\def \Rs {R_{\rm\;\!spherical}}
\def \Dp {D_{\rm parallel}}
\def \Dr {D_{\rm radial}}
\def \Ds {D_{\rm spherical}}

\title{Fisher information for the multi-species Landau system}
\author{Yuzhe Zhu}
\date{December 19, 2025}
\thanks{The author thanks Luis Silvestre for helpful discussions.}

\begin{document}
\begin{abstract}
We consider of the Fisher information for solutions of spatially homogeneous multi-species Landau system. We show that the mass-weighted Fisher information is monotone decreasing in time along the Landau system with a general class of interaction potentials. 
\end{abstract}
\maketitle
\hypersetup{bookmarksdepth=2}
\setcounter{tocdepth}{1}
\tableofcontents

\section{Introduction}
We consider the spatially homogeneous kinetic model for the evolution of a dilute plasma or gas composed of $S\in\mathbb{N}_+$ distinct particle species. For each species indexed by $i\in\{1,\ldots,S\}$ with mass $m_i>0$, we denote by $f_i=f_i(t,v)>0$ the velocity distribution of its particles at time $t\in\R_+$ with velocity $v\in\R^d$ ($d\ge2$). For brevity, we collect the component distributions into the vector
\begin{align*}
\ff=\ff(t,v):=(f_1,\ldots,f_S)(t,v). 
\end{align*}
Of concern is the evolution of the distribution vector $\ff$ governed by the multi-species Landau system 
\begin{align}\label{multi-L}
\partial_tf_i = \sum_{j=1}^S Q_{ji}(f_j,f_i) {\quad\,\rm in\ \ }\R_+\times\R^d, \quad i\in\{1,\ldots,S\}. 
\end{align}
For $i,j\in\{1,\ldots,S\}$, the bilinear collision operator $Q_{ji}$ encodes binary interactions between species $j$ and $i$, and is defined by
\begin{align*}
Q_{ji}(f_j,f_i)&=\frac{c_{ji}}{m_i}\;\nabla_v \cdot \int_{\R^d} \alpha_{ji}(|v-v_*|)\;\!A(v-v_*)\left(\frac{1}{m_i}f_j^*\;\!\nabla_vf_i-\frac{1}{m_j}f_i\;\!\nabla_{v_*}f_j^*\right) \dif v_*,
\end{align*}
where we abbreviated $f_j^*=f_j(t,v_*)$. The coefficients satisfy
\begin{align*}
c_{ji}=c_{ij}\ge0. 
\end{align*}
The potential functions $\alpha_{ji}:\R_+\to\R_+$ are defined by
\begin{align*}
\alpha_{ji}(r):=r^{\gamma_{ji}}{\quad\rm for\ \ }\gamma_{ji}=\gamma_{ij}\in(-d-2,1],
\end{align*}
whose range makes the integral giving $Q_{ji}$ well-defined. The $d\times d$ matrix
\begin{align*}
A(v-v_*):=|v-v_*|^2I_d-(v-v_*)\otimes(v-v_*). 
\end{align*}

With the aim of generalizing the I-theorem from the single-species case ($S=1$) treated in \cite{Guillen-Silvestre} to mixtures, we establish that the Fisher information of the $S$-species system with the species masses $\{m_i\}_{i=1}^S$, defined by 
\begin{align*}
\II(\ff):=\sum_{i=1}^S\frac{1}{m_i}\int_{\R^d}\frac{|\nabla_vf_i|^2}{f_i}\dif v,
\end{align*}
is monotone decreasing along the solution $\ff$ to \eqref{multi-L} for a class of potential functions. 

\begin{theorem}\label{I}
Let the family of constants $\{\gamma_{ij}\}_{i,j=1}^S$ satisfy
\begin{align*}
|\gamma_{ij}|\le 
\begin{cases}
\,2\sqrt{d-1} &{\rm for\ \ } i\neq j,\\[4pt]
\,2\sqrt{d+3-\dfrac{1}{d-1}} &{\rm for\ \ } i=j. 
\end{cases}
\end{align*}
Then the Fisher information $\II(\ff)$ of any solution $\ff$ to \eqref{multi-L} is non-increasing in time. 
\end{theorem}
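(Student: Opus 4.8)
The plan is to differentiate $\II(\ff)$ along \eqref{multi-L} and to exhibit the resulting dissipation as a sum of non-negative quadratic integrals. Writing the first variation of the single-component functional $f\mapsto\int_{\R^d}|\nabla_v f|^2/f\dif v$ as $\psi_i:=-2\,\Delta_v f_i/f_i+|\nabla_v f_i|^2/f_i^2$, the chain rule gives
\[
\frac{\dif}{\dif t}\II(\ff)=\sum_{i,j=1}^{S}\frac{1}{m_i}\int_{\R^d}\psi_i\,Q_{ji}(f_j,f_i)\dif v.
\]
Using the divergence form of $Q_{ji}$, the factorization $\frac{1}{m_i}f_j^*\nabla_v f_i-\frac{1}{m_j}f_i\nabla_{v_*}f_j^*=f_if_j^*\big(\frac{1}{m_i}\nabla_v\log f_i-\frac{1}{m_j}\nabla_{v_*}\log f_j^*\big)$, and integrating by parts in $v$, each summand becomes a double integral over $(v,v_*)\in\R^d\times\R^d$ in which $A(z)$, $z:=v-v_*$, contracts derivatives of $\log f_i$ at $v$ against derivatives of $\log f_j$ at $v_*$. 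I would first isolate the diagonal terms $i=j$, which reproduce the single-species computation of \cite{Guillen-Silvestre}, and then treat the genuinely bilinear off-diagonal terms, where the two distinct masses $m_i,m_j$ survive as parameters.

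Next I would pass to $g_i:=\log f_i$ and symmetrize. Since $\psi_i=-2\Delta_v g_i-|\nabla_v g_i|^2$, the gradient $\nabla_v\psi_i$ carries third-order derivatives; the point of the integration by parts against the flux is to trade these, via the structural identities (writing $\hat z:=z/|z|$)
\[
A(z)=|z|^2(I_d-\hat z\otimes\hat z),\qquad A(z)z=0,\qquad \nabla\cdot A(z)=-(d-1)z,\qquad \nabla_v\alpha_{ij}=\gamma_{ij}|z|^{\gamma_{ij}-2}z,
\]
for second-order quantities, namely the Hessian contractions $A(z):\nabla^2_v g_i$, $A(z):\nabla^2_{v_*}g_j^*$ and the gradients $\nabla_v g_i,\nabla_{v_*}g_j^*$. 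Because $c_{ij}=c_{ji}$ and $\gamma_{ij}=\gamma_{ji}$, the $(i,j)$ and $(j,i)$ contributions can be added and the $(v,v_*)$ integral symmetrized under $v\leftrightarrow v_*$, which renders the integrand a manifestly symmetric quadratic form $\QQ_{ij}$ with coefficients depending only on $d$, $\gamma_{ij}$, and the mass ratio $m_i/m_j$:
\[
-\frac{\dif}{\dif t}\II(\ff)=\sum_{i,j=1}^{S}\int_{\R^d}\!\int_{\R^d}\alpha_{ij}(|z|)\,f_i\,f_j^*\,\QQ_{ij}\,\dif v_*\dif v.
\]

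The crux is the pointwise non-negativity of $\QQ_{ij}$, and this is where I expect the real work to lie. Because $A(z)$ equals $|z|^2$ times the orthogonal projection onto $z^\perp$, the natural move is to decompose each gradient and each Hessian contraction into its radial part along $\hat z$ and its spherical part on $z^\perp$ --- precisely the splitting encoded by the notation $\Dr$, $\Dp$, $\Ds$, $\Rs$. In these coordinates the first-order (gradient) degrees of freedom decouple from the genuinely second-order ones, and the positivity of $\QQ_{ij}$ collapses to that of a small, explicit quadratic form whose non-negativity is equivalent to a discriminant inequality in $\gamma_{ij}$. Optimizing this inequality is what produces the two thresholds: the symmetric two-distribution coupling of the off-diagonal case yields the mass-independent bound $|\gamma_{ij}|\le2\sqrt{d-1}$, while the diagonal case $i=j$ inherits the extra $v\leftrightarrow v_*$ self-symmetry of the single-species operator, contributing additional non-negative terms that relax the constraint to $|\gamma_{ii}|\le2\sqrt{d+3-1/(d-1)}$, reproducing the single-species analysis of \cite{Guillen-Silvestre}.

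The \emph{main obstacle} is thus twofold: first, organizing the integration by parts so that the radial/spherical decomposition cleanly separates orders and exhibits the right sum-of-squares structure, handling the degeneracy of $A(z)$ on the radial direction; and second, carrying out the discriminant optimization so that the constants come out sharp and, crucially, uniform over the mass ratio $m_i/m_j$ and over $(v,v_*)$, so that the velocity integrals never expose any sign-indefinite remainder. By contrast, I expect the upstream steps --- the chain rule, the divergence-form integration by parts, and the justification of the vanishing of boundary terms using the decay granted by the admissible range $\gamma_{ij}\in(-d-2,1]$ --- to be essentially routine.
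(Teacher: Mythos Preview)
Your outline tracks the paper's argument through the symmetrization and the radial/spherical splitting, but there is a genuine gap at the step you flag as ``the crux''. You assert that the quadratic form $\QQ_{ij}$ is \emph{pointwise} non-negative in $(v,v_*)$, with the thresholds on $\gamma_{ij}$ emerging from a discriminant check on a small explicit matrix. This is not how the constants arise, and a pointwise argument cannot reach them. After the change to center-of-mass coordinates $(v,v_*)\mapsto(z,r,\sigma)$ with $r=|v-v_*|$ and $\sigma\in\Sp^{d-1}$, the dissipation for each fixed $(z,r)$ organizes into a non-negative parallel part, a non-negative radial part, and a spherical block of the form
\[
\int_{\Sp^{d-1}} F_{ij}\,\Gamma_2(\log F_{ij})\dif\sigma\ -\ \frac{\gamma_{ij}^2}{4}\int_{\Sp^{d-1}} F_{ij}\,\Gamma(\log F_{ij})\dif\sigma,
\]
where $\Gamma,\Gamma_2$ are the Bakry--\'Emery operators on $\Sp^{d-1}$. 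The sign of this block is controlled by the \emph{integral} inequality $\int_{\Sp^{d-1}}\varphi\,\Gamma_2(\log\varphi)\ge\Lambda\int_{\Sp^{d-1}}\varphi\,\Gamma(\log\varphi)$, whose optimal constant is $\Lambda=d-1$ for general $\varphi$ and $\Lambda=d+3-\tfrac{1}{d-1}$ under the antipodal symmetry $\varphi(\sigma)=\varphi(-\sigma)$; these are exactly the two thresholds in the statement. Pointwise, Bochner's formula only gives $\Gamma_2\ge(d-2)\,\Gamma$, which would yield the strictly weaker bound $|\gamma_{ij}|\le2\sqrt{d-2}$ and would miss, for instance, the Coulomb case $d=3$, $\gamma_{ii}=-3$.

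Relatedly, the improved diagonal constant for $i=j$ does not come from ``additional non-negative terms'' produced by the $v\leftrightarrow v_*$ self-symmetry. That swap corresponds to $\sigma\mapsto-\sigma$, so $F_{ii}(v,v_*)=f_i(v)f_i(v_*)$ is an even function of $\sigma$ and descends to $\mathbb{RP}^{d-1}$. The improvement from $d-1$ to $d+3-\tfrac{1}{d-1}$ is a spectral-gap statement for even functions on the sphere (equivalently the $\Gamma_2$ criterion on projective space, as in \cite{Guillen-Silvestre,Ji}), not an algebraic discriminant bound that can be read off pointwise.
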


\subsection{Multi-species Landau system}
In physics, the symmetric coupling coefficients $c_{ji}$ appearing in the operators $Q_{ji}$ collect microscopic interaction prefactors determined by species properties such as mass and charge. The tensor $A(v-v_*)$ is a scaled orthogonal projection onto the hyperplane perpendicular to the relative velocity $v-v_*$, and captures that colliding particles undergo predominantly small-angle deviations. The exponent $\gamma_{ij}$ in $\alpha_{ij}$ classifies the interaction potential: $\gamma_{ij}\in(0,1]$ corresponds to hard potentials, $\gamma_{ij}=0$ to Maxwellian molecules, and $\gamma_{ij}\in(-d-2,0)$ to soft potentials. Of particular physical significance is the Coulomb interaction, given by $\gamma_{ij}=-3$ in dimension $d=3$. 

The prototypical configuration discussed in \cite{Landau} is the two-species electron-ion plasma system, and models arising from stellar systems can be found in \cite{Rosen,BT}. One may also refer to \cite[Chapter~7]{HCB} for the general kinetic theory of mixtures and to \cite[Chapter~4]{Lifshitz} for a comprehensive exposition of plasma collision processes in physical kinetics. 

Despite its mixture nature, the conservation and entropy structural features of \eqref{multi-L} are comparable to those in the single-species case ($S=1$). We briefly outline them below. 

\subsubsection{Conservation structure and equilibria}
Owing to the projection property of the tensor $A$, the divergence form of the collision operator $Q_{ji}$ and symmetric structure within \eqref{multi-L}, the system conserves, for each species $i\in\{1,\ldots,S\}$, its number density $n_i$, as well as the total momentum $P$ and energy $E$, 
\begin{align}\label{conserve1}
n_i:=\int_{\R^d} f_i\dif v,\quad
P:=\sum_{i=1}^Sm_i\int_{\R^d}vf_i\dif v,\quad
E:=\sum_{i=1}^S\frac{m_i}{2}\int_{\R^d}|v|^2f_i\dif v. 
\end{align}
From these quantities, one may further express the conserved macroscopic density $\rho$, bulk velocity $u$ and temperature $\theta$ of the mixture as 
\begin{align}\label{conserve2}
\rho:=\sum_{i=1}^Sm_in_i,\quad
u:=\frac{P}{\rho},\quad 
\theta:=\frac{\sum_{i=1}^Sm_i\int_{\R^d}|v-u|^2f_i\dif v}{d\sum_{i=1}^Sn_i}=\frac{2E-\rho|u|^2}{d\sum_{i=1}^Sn_i}. 
\end{align}
An equilibrium state of \eqref{multi-L} is the multi-species Maxwellian 
\begin{align}\label{conserved-eq}
\boldsymbol{\mu}(v):=(\mu_1,\ldots,\mu_S)(v),\quad
\mu_i(v):= n_i\left(\frac{m_i}{2\pi\theta}\right)^{d/2} \exp\left(-\frac{m_i\;\!|v-u|^2}{2\theta}\right), 
\end{align}
for prescribed conserved parameters $(n_i,u,\theta)$. One readily verifies that $Q_{ji}(\mu_j,\mu_i)=0$ for all $i,j\in\{1,\ldots,S\}$. 

\subsubsection{Entropy structure}
According to the additivity of entropy across species labels, the Boltzmann entropy of a multi-species state $\ff=(f_1,\ldots,f_S)$ is defined as 
\begin{align*}
\HH(\ff):=\sum_{i=1}^S\int_{\R^d} f_i\;\!\log f_i\dif v. 
\end{align*}
The evolution of this functional along solutions of \eqref{multi-L} is identical to that of the relative entropy with respect to the equilibrium $\boldsymbol{\mu}=(\mu_1,\ldots,\mu_S)$, 
\begin{align*}
\sum_{i=1}^S\HH(f_i\;\!|\;\!\mu_i),\quad \HH(f_i\;\!|\;\!\mu_i):=\int_{\R^d}f_i\;\!\log\frac{f_i}{\mu_i}\dif v,  
\end{align*} 
since they differ only by an additive constant fixed by the conserved quantities. 

Given the logarithmic form of the entropy, we abbreviate
\begin{align*}
\psi_i:=\log f_i,\quad \psi_i^*:=\log f_i^*,\quad i\in\{1,\ldots,S\}. 
\end{align*}
The collision operator $Q_{ji}$ can be then reformulated in its entropy form as  
\begin{align}\label{multi-LE} 
Q_{ji}(f_j,f_i)=c_{ji}\;\nabla_{m_iv}\cdot\int_{\R^d} \alpha_{ji}\;\!A\,f_if_j^*\left(\nabla_{m_iv}\psi_i-\nabla_{m_jv_*}\psi_j^*\right) \dif v_*. 
\end{align}
Here the notations $\nabla_{m_iv}=\frac{1}{m_i}\nabla_v$ and $\nabla_{m_jv_*}=\frac{1}{m_j}\nabla_{v_*}$ are inherited from the change of variables and represent momentum gradients.

\subsubsection{H-theorem via symmetrization}
Boltzmann’s H-theorem, formulated in the context of rarefied gas dynamics, asserts that the entropy functional $\HH$ decreases monotonically in time. In the present setting, the entropy dissipation identity follows from the standard symmetrization argument, originating in Boltzmann's computation \cite{Boltzmann}. More precisely, by applying \eqref{multi-L} and integration by parts, 
\begin{align*}
\frac{\dif}{\dif t}\,\HH(\ff)
&=\sum_{i,j=1}^S \int_{\R^d} \psi_i\, Q_{ji}(f_j,f_i)\dif v\\
&=-\sum_{i,j=1}^S c_{ji}  \int_{\R^{2d}} \alpha_{ji}\;\!A\,f_if_j^*\,\nabla_{m_iv} \psi_i\cdot\big(\nabla_{m_iv}\psi_i-\nabla_{m_jv_*}\psi_j^*\big) \dif v\dif v_*. 
\end{align*}
The symmetries $c_{ji}=c_{ij}$, $\alpha_{ji}=\alpha_{ij}$ and  $A(v-v_*)=A(v_*-v)$ ensure the above expression remains unchanged when swapping the variables $v\leftrightarrow v_*$ and the indices $i\leftrightarrow j$. Averaging the original and swapped forms then gives 
\begin{align}\label{multi-HH}
\frac{\dif}{\dif t}\,\HH(\ff)
=-\frac{1}{2}\sum_{i,j=1}^S c_{ij} \int_{\R^{2d}} \alpha_{ij}\;\!A\,f_if_j^* :\big(\nabla_{m_iv}\psi_i-\nabla_{m_jv_*}\psi_j^*\big)^{\otimes2} \dif v\dif v_*\le0. 
\end{align}
The entropy is monotone decreasing along solutions of \eqref{multi-L} and its production vanishes precisely at the equilibrium. 

\subsection{Multi-species Fisher information}
The Fisher information of concern for the mixture $\ff=(f_i,\ldots,f_S)$ is in the mass-weighted form, 
\begin{align*}
\II(\ff)=\sum_{i=1}^S\frac{1}{m_i}\II_{\;\!\rm s}(f_i), 
\end{align*}
where the Fisher information for a single species is defined as 
\begin{align*}
\II_{\;\!\rm s}(f_i):=\int_{\R^d}\frac{|\nabla_vf_i|^2}{f_i}\dif v
=\int_{\R^d}f_i\left|\nabla_v\log f_i\right|^2\dif v.
\end{align*}

\subsubsection{Linear Fokker-Planck variant}
The expression of the mass-weighted Fisher information can be heuristically understood by differentiating the entropy functional along a simplified linear diffusion-drift dynamics that admits the same equilibrium as the nonlinear kinetic system~\eqref{multi-L}.

The form of the multi-species equilibrium $\boldsymbol{\mu}=(\mu_1,\ldots,\mu_S)$ given in \eqref{conserved-eq} fixes the diffusion-drift balance in velocity space. A natural linear variant, sharing the same equilibrium state, is generated by the Fokker-Planck operator
\begin{align*}
\mathcal{L}_if_i:=c_i\,\nabla_v\cdot\left(\mu_i\,\nabla_v\frac{f_i}{\mu_i}\right)
=c_i\,\nabla_v\cdot\left(f_i\,\nabla_v\log\frac{f_i}{\mu_i}\right), 
\end{align*}
where $c_i>0$ is the diffusion coefficient. Assuming that the linear system
\begin{align*}
\partial_tf_i=\mathcal{L}_if_i, \quad i\in\{1,\ldots,S\}, 
\end{align*}  
conserves the quantities $n_i,P,E$ defined in \eqref{conserve1} and satisfies \eqref{conserve2}, one is able to determine that, modulo a common multiplicative constant, 
\begin{align*}
c_i=\frac{1}{m_i}.  
\end{align*} 
The entropy dissipation identity along this linear flow of each species reads
\begin{align*}
\frac{\dif}{\dif t}\,\HH(f_i\;\!|\;\!\mu_i)
=\int_{\R^d}\log\frac{f_i}{\mu_i}\,\mathcal{L}_if_i\dif v
=-\frac{1}{m_i}\int_{\R^d}f_i\,\bigg|\nabla_v\log\frac{f_i}{\mu_i}\bigg|^2\dif v. 
\end{align*} 
In terms of $\II_{\;\!\rm s}$, it can be reformulated as
\begin{align*}
\frac{\dif}{\dif t}\,\HH(f_i\;\!|\;\!\mu_i)
=-\frac{1}{m_i}\II_{\;\!\rm s}(f_i)-\frac{2d\;\!n_i}{\theta}+\frac{m_i}{\theta^2}\int_{\R^d}|v-u|^2f_i\dif v.  
\end{align*} 
Summing over $i=1,\ldots,S$ gives 
\begin{align*}
\frac{\dif}{\dif t}\,\HH(\ff)=\frac{\dif}{\dif t}\sum_{i=1}^S\HH(f_i\;\!|\;\!\mu_i)
=-\II(\ff)- \frac{d}{\theta}\sum_{i=1}^S n_i. 
\end{align*} 
This viewpoint clarifies the role of mass weighting in the sum over species, showing that it aligns the entropy dissipation along the linear flow with the multi-species Fisher information, up to an additive constant fixed by the conserved quantities. 

\subsubsection{I-theorem via symmetrization}
In the context of single-species dynamics, the seminal contributions of \cite{Guillen-Silvestre} and \cite{ISV} established the validity of the I-theorem. It identifies the Fisher information as an additional decreasing Lyapunov functional for both the Landau and Boltzmann collision operators, alongside the entropy in the classical H-theorem. 

Motivated by these developments, we broaden the scope to multi-species dynamics. We first propose extending the symmetrization argument, previously employed to derive the entropy dissipation identity \eqref{multi-HH}, to the analysis of the Fisher information dissipation for \eqref{multi-L}. The symmetrization-based approach not only recovers and generalizes the computation performed via the lifting technique developed in \cite{Guillen-Silvestre}, but also offers greater flexibility when applied to diverse classes of kinetic models with highly nonlinear collision statistics, such as those of Fermi-Dirac \cite[Chapter~17]{CC} and Lenard-Balescu \cite[Chapter~4]{Lifshitz} type. 

In Section~\ref{section-Fisher}, we carry out the computation of the Fisher information dissipation. By generalizing the vector field formulation used in \cite{Guillen-Silvestre}, we describe the associated differential structure and the underlying invariant properties, and show how these features persist across multi-species interactions. 

\subsection{Parametrization and geometric reduction}
One of the key observations in \cite{Guillen-Silvestre} for the single-species case is that, upon adopting an appropriate parametrization, the monotonicity of the Fisher information can be ensured by a geometric inequality that admits an intrinsic interpretation within the Bakry-\'Emery formalism. 

\subsubsection{Microscopic parametrization}\label{mp}
The properties of the collision operator are intimately tied to the kinematics of binary elastic collisions. Consider two particles of species $i$ and $j$ with pre-collision velocities $(v,v_*)\in\R^{2d}$. At this microscopic level, the fundamental conservation laws of momentum and energy constrain the post-collision velocities $(v',v_*')\in\R^{2d}$ to the (multi-species) Boltzmann collision manifold
\begin{align*}
\mathcal{M}_{ij}(v,v_*):=\big\{\,(v',v_*')\in\R^{2d}:\ m_iv+m_jv_*&=m_iv'+m_jv_*',\\\ m_i|v|^2+m_j|v_*|^2&=m_i|v'|^2+m_j|v_*'|^2 \,\big\}. 
\end{align*}
This $(d-1)$-dimensional manifold is commonly parameterized by an angular variable $\sigma\in\Sp^{d-1}$. In terms of the center-of-mass velocity $z\in\R^d$ and the relative speed $r\in\R_+$ defined by
\begin{align*}
z:= \frac{m_iv+m_jv_*}{m_i+m_j}, \quad r:=|v-v_*|, 
\end{align*}
the post-collision velocities can be written as 
\begin{align}\label{collision}
\left\{\begin{aligned}
&\ v'= z +\frac{m_j}{m_i+m_j}\;\!r\;\!\sigma,\\
&\ v_*'= z -\frac{m_i}{m_i+m_j}\;\!r\;\!\sigma. 
\end{aligned}\right. 
\end{align}
This constitutes the standard $\sigma$-representation, which highlights that the collision acts as a rotation of the relative velocity in the center-of-mass frame, leaving $(z,r)$ invariant. 

Accordingly, the microscopic parametrization \eqref{collision} endows the doubled velocity space $\mathbb{R}^{2d}$ with a natural coordinate system given by $(z,r,\sigma)\in\R^d\times\R_+\times\Sp^{d-1}$. In Section~\ref{section-inequality}, passing to $(z,r,\sigma)$ makes explicit the reduction to a spherical geometric setting. 

\subsubsection{Geometric inequality on the sphere}
Upon the microscopic parametrization, the Fisher information dissipation can be reorganized so that the genuinely delicate part lives in the angular variable $\sigma\in\Sp^{d-1}$. It was provided in \cite{Guillen-Silvestre} a sufficient condition reducing the nonnegativity of the dissipation to a purely geometric inequality on the sphere, in a manner analogous to the classical Bakry-\'Emery $\Gamma_2$ criterion for diffusion operators (see \cite{BE,BGL}). The resulting monotonicity statement holds only for those interaction potentials $\alpha_{ji}$ whose parameters $\gamma_{ij}$ fall below the best constant in that inequality. 

A key input in the single-species setting is the antipodal symmetry of the collision geometry: the microscopic collision process does not distinguish $\sigma$ from $-\sigma$ within the same species. Equivalently, for the velocity distributions of species indexed by $i,j\in\{1,\ldots,S\}$, 
\begin{align}\label{fij}
f_i(v)f_j(v_*)=f_i(v_*)f_j(v) {\quad\rm if\ \,} i=j. 
\end{align}
In particular, this identity is automatic when $S=1$. Based on this symmetric condition, \cite{Guillen-Silvestre,Ji} refined the Bakry-\'Emery geometric criterion by passing to the real projective space $\Sp^{d-1}/\{\pm I\}$, which enlarges the admissible class of potentials and, in the single-species case, allows one to include the Coulomb interaction. 

For mixtures, however, interactions between distinct species introduce an intrinsic asymmetry: when $i\neq j$, the relation \eqref{fij} generally fails. This loss of symmetry obstructs any further improvement of the geometric criterion. In particular, the sharp constant available in that framework does not cover the Coulomb interaction for cross-species collisions, and the spherical geometry exploited in \cite{Guillen-Silvestre} no longer suffices to recover a definite sign for the Fisher information dissipation.

In Section~\ref{section-inequality}, we state the strongest conclusions one can reasonably expect from the spherical geometric approach, as dictated by the optimal constant in the underlying inequality. Numerical tests suggest that this threshold is essentially optimal, in the sense that the dissipation may fail to have a definite sign beyond it. 

\section{Fisher information along the Landau system}\label{section-Fisher}
This section is devoted to the derivation of the Fisher information dissipation formula in Proposition~\ref{diss-multiL}, stated further below. 

\subsection{Differentiation along the Landau system}
Through symmetrization, we first derive a formula for the time derivative of the Fisher information along the Landau system.
\begin{lemma}\label{I11}
Let $\ff=(f_1,\ldots,f_S)$ be a solution to \eqref{multi-L}, and let $\psi_i:=\log f_i$ and $\psi_i^*:=\log f_i^*$. We have 
\begin{align*}
\frac{\dif}{\dif t}\,\II(\ff)
=\sum_{i,j=1}^S\frac{c_{ij}}{2}\int_{\R^{2d}} \alpha_{ij}\;\!A\;\!f_if_j^*\left(\nabla_{m_iv}\psi_i-\nabla_{m_jv_*}\psi_j^*\right)\cdot(\nabla_{m_iv}-\nabla_{m_jv_*})\left(\frac{\Xi_i}{m_i}+\frac{\Xi_j^*}{m_j} \right), 
\end{align*}
where $\alpha_{ij}=\alpha_{ij}(|v-v_*|)$, $A=A(v-v_*)$, and 
\begin{align}\label{phiv}
\begin{aligned}
\Xi_i&:=2\,\Delta_v\psi_i + |\nabla_v\psi_i|^2, \\
\Xi_i^*&:=2\,\Delta_{v_*}\psi_i^* + |\nabla_{v_*}\psi_i^*|^2.  
\end{aligned}
\end{align}
\end{lemma}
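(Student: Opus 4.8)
The plan is to differentiate the mass-weighted Fisher information species by species, reduce each time derivative to an integral tested against $\partial_t f_i$, substitute the Landau system \eqref{multi-L}, and then symmetrize exactly as in the entropy computation \eqref{multi-HH}. First I would handle a single species. Differentiating $\II_{\rm s}(f_i)=\int_{\R^d}|\nabla_vf_i|^2/f_i\dif v$ in time produces two contributions, one from $\nabla_v\partial_tf_i$ and one from $\partial_tf_i$ entering through $1/f_i$. Integrating by parts in $v$ to strip the derivative off $\partial_tf_i$ in the first contribution, the two terms combine into
\[
\frac{\dif}{\dif t}\,\II_{\rm s}(f_i)=-\int_{\R^d}\Big(\tfrac{2\Delta_vf_i}{f_i}-\tfrac{|\nabla_vf_i|^2}{f_i^2}\Big)\partial_tf_i\dif v=-\int_{\R^d}\Xi_i\,\partial_tf_i\dif v,
\]
where the bracket rewrites as $\Xi_i=2\Delta_v\psi_i+|\nabla_v\psi_i|^2$ in terms of $\psi_i=\log f_i$. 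This is the only step that generates the precise combination \eqref{phiv}, and it identifies $-\Xi_i$ as the variational derivative of $\II_{\rm s}$.

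Next I would insert the equation. Summing with the mass weights $1/m_i$ and using \eqref{multi-L} gives
\[
\frac{\dif}{\dif t}\,\II(\ff)=-\sum_{i,j=1}^S\frac{1}{m_i}\int_{\R^d}\Xi_i\,Q_{ji}(f_j,f_i)\dif v.
\]
Substituting the entropy form \eqref{multi-LE} of $Q_{ji}$ and integrating by parts in $v$ once more, now to transfer the momentum divergence $\nabla_{m_iv}\cdot$ onto $\Xi_i$, each summand becomes an integral over $\R^{2d}$ of $\alpha_{ij}\;\!A\,f_if_j^*\,\bb\cdot\nabla_{m_iv}(\Xi_i/m_i)$, where $\bb:=\nabla_{m_iv}\psi_i-\nabla_{m_jv_*}\psi_j^*$ and the constant $c_{ij}$ (using $c_{ji}=c_{ij}$) sits out front. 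The factor $\tfrac{1}{m_i}\nabla_{m_iv}\Xi_i=\nabla_{m_iv}(\Xi_i/m_i)$ is what supplies the weighted gradient appearing in the statement.

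The final step is the symmetrization. I would apply the involution exchanging $(v,i)\leftrightarrow(v_*,j)$ to the double sum and double integral. The factors $c_{ij}$, $\alpha_{ij}$, $A(v-v_*)$ and the product $f_if_j^*$ are all invariant, the vector $\bb$ changes sign, and the test term transforms as $\nabla_{m_iv}(\Xi_i/m_i)\mapsto\nabla_{m_jv_*}(\Xi_j^*/m_j)$. Averaging the original expression with its image therefore produces the half-sum
\[
\frac{\dif}{\dif t}\,\II(\ff)=\sum_{i,j=1}^S\frac{c_{ij}}{2}\int_{\R^{2d}}\alpha_{ij}\;\!A\,f_if_j^*\,\bb\cdot\Big(\nabla_{m_iv}\tfrac{\Xi_i}{m_i}-\nabla_{m_jv_*}\tfrac{\Xi_j^*}{m_j}\Big).
\]
Since $\Xi_i$ depends only on $v$ and $\Xi_j^*$ only on $v_*$, the two cross terms $\nabla_{m_iv}(\Xi_j^*/m_j)$ and $\nabla_{m_jv_*}(\Xi_i/m_i)$ vanish, so the difference in parentheses equals $(\nabla_{m_iv}-\nabla_{m_jv_*})(\Xi_i/m_i+\Xi_j^*/m_j)$, which is exactly the claimed formula.

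I expect the routine analytic points — justifying differentiation under the integral sign and the vanishing of boundary terms in the two integrations by parts — to rely on decay and regularity of $\ff$ inherited from well-posedness, and to be standard. The step demanding the most care is the sign bookkeeping in the symmetrization: the sign flip of $\bb$ must be combined correctly with the relabeling of the $\Xi$-gradient, so that averaging yields the \emph{difference} $\nabla_{m_iv}(\Xi_i/m_i)-\nabla_{m_jv_*}(\Xi_j^*/m_j)$ rather than a sum, after which the cross-term cancellation recasts it in the compact form stated in the lemma.
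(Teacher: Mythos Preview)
Your proposal is correct and follows essentially the same route as the paper's proof: compute $\frac{\dif}{\dif t}\,\II_{\rm s}(f_i)=-\int_{\R^d}\Xi_i\,\partial_tf_i$, insert the entropy form \eqref{multi-LE} of $Q_{ji}$ and integrate by parts once, then symmetrize under $(v,i)\leftrightarrow(v_*,j)$ and average. Your explicit observation that the cross terms $\nabla_{m_iv}(\Xi_j^*/m_j)$ and $\nabla_{m_jv_*}(\Xi_i/m_i)$ vanish, converting the difference into $(\nabla_{m_iv}-\nabla_{m_jv_*})(\Xi_i/m_i+\Xi_j^*/m_j)$, is left implicit in the paper but is exactly the right final step.
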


\begin{proof}
Along the Fisher information for each species in \eqref{multi-L}, a direct computation with integration by parts yields that 
\begin{align*}
\frac{\dif}{\dif t}\,\II_{\;\!\rm s}(f_i)
&=\sum_{j=1}^S \int_{\R^d} |\nabla_v\psi_i|^2\;\!Q_{ji}(f_j,f_i) + 2f_i\,\nabla_v\psi_i\cdot\nabla_v\left(f_i^{-1}Q_{ji}(f_j,f_i)\right)\\
&=\sum_{j=1}^S \int_{\R^d} |\nabla_v\psi_i|^2\;\!Q_{ji}(f_j,f_i) - 2\left(\Delta_v\psi_i +f_i^{-1}\nabla_v f_i\cdot\nabla_v\psi_i\right) Q_{ji}(f_j,f_i). 
\end{align*}
By rearranging terms and using \eqref{multi-LE}, this becomes
\begin{align*}
\frac{\dif}{\dif t}\,\II_{\;\!\rm s}(f_i)
=-\sum_{j=1}^S \int_{\R^d} Q_{ji}(f_j,f_i)\;\Xi_i\dif v. 
\end{align*}
Invoking \eqref{phiv} and performing an integration by parts, we obtain 
\begin{align*}
\frac{\dif}{\dif t}\,\II_{\;\!\rm s}(f_i)
&=\sum_{j=1}^S c_{ji} \int_{\R^{2d}} \alpha_{ji}\;\!A \,f_if_j^*\left(\nabla_{m_iv}\psi_i-\nabla_{m_jv_*}\psi_i^*\right)\cdot\nabla_{m_iv}\Xi_i \dif v\dif v_*. 
\end{align*}
Upon summing over $i$ with the weight $1/m_i$, we note that the expression remains unchanged when the variables $v$ and $v_*$, as well as the indices $i$ and $j$, are swapped, thereby yielding 
\begin{align*}
\frac{\dif}{\dif t}\,\II(\ff)
&=\sum_{i,j=1}^S c_{ij} \int_{\R^{2d}} \alpha_{ij}\;\!A \,f_if_j^*\left(\nabla_{m_iv}\psi_i-\nabla_{m_jv_*}\psi_i^*\right)\cdot\nabla_{m_iv}\left(\frac{\Xi_i}{m_i}\right) \dif v\dif v_*\\
&=\sum_{i,j=1}^S c_{ij} \int_{\R^{2d}} \alpha_{ij}\;\!A\,f_if_j^*\left(\nabla_{m_jv_*}\psi^*-\nabla_{m_iv}\psi\right)\cdot\nabla_{m_jv_*} \left(\frac{\Xi_j^*}{m_j}\right) \dif v\dif v_*, 
\end{align*}
where the facts that $c_{ji}=c_{ij}$, $\alpha_{ji}=\alpha_{ij}$ and  $A(v-v_*)=A(v_*-v)$ were used. By averaging, we arrive at the desired symmetrized form. 
\end{proof}

\subsection{Tensorized functions}
Given the symmetrized form in Lemma~\ref{I11}, it is convenient to tensorize the pair of one-particle densities $(f_i,f_j^*)$ into a single function on the doubled velocity space $(v,v_*)\in\R^d\times\R^d$. More precisely, for each ordered pair of species $(i,j)$, we define 
\begin{align*}
F_{ij}(v,v_*)&:=f_if_j^*=f_i(v)f_j(v_*). 
\end{align*}
This tensorization further shows, as observed in \cite{Guillen-Silvestre}, that the nonlocal structure of the collision operator can be realized as a second-order differential operator. 
Accordingly, we set 
\begin{align*}
\Psi_{ij}(v,v_*)&:=\log(F_{ij})=\log (f_if_j^*)=\psi_i+\psi_j^*. 
\end{align*}
Recalling \eqref{phiv}, we have
\begin{align*}
\frac{\Xi_i}{m_i} + \frac{\Xi_j^*}{m_j} =2\left(\frac{\Delta_v}{m_i}+\frac{\Delta_{v_*}}{m_j}\right)\Psi_{ij} + \frac{|\nabla_v\Psi_{ij}|^2}{m_i} + \frac{|\nabla_{v_*}\Psi_{ij}|^2}{m_j}. 
\end{align*}
With the tensorized notation, the formula in Lemma~\ref{I11} can be thus recast as 
\begin{align}\label{I111}
\frac{\dif}{\dif t}\,\II(\ff)
=\sum_{i,j=1}^S c_{ij} \int_{\R^{2d}}  \alpha_{ij}\;\!F_{ij}\;\!A\left(\nabla_{m_iv}-\nabla_{m_jv_*}\right)\Psi_{ij}\cdot(\nabla_{m_iv}-\nabla_{m_jv_*}) \left(\frac{\Delta_v}{m_i}+\frac{\Delta_{v_*}}{m_j}\right)\Psi_{ij} \nonumber\\
+\frac{\alpha_{ij}}{2}\,F_{ij}\;\!A\left(\nabla_{m_iv}-\nabla_{m_jv_*}\right)\Psi_{ij}\cdot(\nabla_{m_iv}-\nabla_{m_jv_*}) \left(\frac{|\nabla_v\Psi_{ij}|^2}{m_i} + \frac{|\nabla_{v_*}\Psi_{ij}|^2}{m_j}\right) &. 
\end{align}
As it is clear that the integral is taken over $(v,v_*)\in\R^{2d}$, we henceforth omit the integration element $\dif v\dif v_*$.

\subsection{Vector field formulation}\label{vf}
We generalize the vector-field formulation of the Landau collision operator, a tractable analytic framework developed in \cite{Guillen-Silvestre}. 

Consider the special orthogonal Lie algebra $\mathfrak{so}(d)$, consisting of all $d\times d$ skew-symmetric matrices. A standard basis of $\mathfrak{so}(d)$ is given by $\{E_{pq}-E_{qp}\}_{1\le p<q\le d}$, where $E_{pq}$ is the $d\times d$ matrix with a $1$ in the $(p,q)$-entry and zeros elsewhere. The dimension of $\mathfrak{so}(d)$ is 
\begin{align*}
M:=\left|\{E_{pq}-E_{qp}:1\le p<q\le d\}\right|=\frac{d\left(d-1\right)}{2}. 
\end{align*}
Fix an enumeration $\{(p_k,q_k)\}_{k=1}^M=\{(p,q):1\le p<q\le d\}$ of all such index pairs. For $v,v_*\in\R^d$, define the vector fields
\begin{align}\label{bbk}
\begin{aligned}
\bb_k:\!&=(E_{p_kq_k}-E_{q_kp_k})(v-v_*)\\
&=(v-v_*)_{q_k}\;\!\ee_{p_k}-(v-v_*)_{p_k}\;\!\ee_{q_k}, 
\end{aligned}
\end{align}
where $\{\ee_p\}_{p=1}^d$ denotes the standard orthonormal basis of $\R^d$. 

As $|z|^{-2}A(z)$ is the orthogonal projection onto $(\R z)^\perp$, it admits the tensor representation 
\begin{align}\label{AA} 
A(v-v_*)=\sum_{k=1}^M\bb_k\otimes\bb_k. 
\end{align}
For $k=1,2,\dots,M$, by setting 
\begin{align*}
\BB:=\begin{pmatrix}\bb_k/m_i\\-\bb_k/m_j\end{pmatrix}, 
\end{align*}
we consider the directional derivative  
\begin{align}\label{LLk}
\LL:=\BB\cdot\nabla
=\frac{1}{m_i}\bb_k\cdot\nabla_v -\frac{1}{m_j}\bb_k\cdot\nabla_{v_*}. 
\end{align}
It generates the rotations in the relative velocity variable $v-v_*$ in the $(p_k,q_k)$-plane about the center $m_iv+m_jv_*$. 
Let us enumerate some of its fundamental properties. 

\begin{enumerate}[label=(\roman*), leftmargin=*, labelsep=0.5em]
\item Since $\BB$ is divergence-free, $\LL$ is skew-adjoint. 

\item\label{property-skew} The matrix $D\BB$ induced by the commutator $[\nabla,\LL]$ is skew-symmetric. 

\item Since $\bb_k$ is perpendicular to $v-v_*$, $\LL$ annihilates all the functions depending only on $|v-v_*|$. 
The center $m_iv+m_jv_*$ is also invariant under the action of $\LL$. 
More concretely, for any differentiable function $g=g(|v-v_*|,m_iv+m_jv_*)$, we have
\begin{align*}
\LL g=0.
\end{align*} 
Here $m_iv+m_jv_*$ can be interpreted as the total momentum of two interacting particles of species $i$ and $j$. Recalling the center-of-mass velocity $z:=\frac{m_iv+m_jv_*}{m_i+m_j}$, the total kinetic energy is another invariance of $\LL$, as it can be expressed as 
\begin{align*}
m_i|v|^2+m_j|v_*|^2
&=m_i\left|z+\frac{m_j(v-v_*)}{m_i+m_j}\right|^2 + m_j\left|z-\frac{m_i(v-v_*)}{m_i+m_j}\right|^2\\
&=\frac{|m_iv+m_jv_*|^2}{m_i+m_j} + \frac{m_i\;\!m_j}{m_i+m_j}\,|v-v_*|^2. 
\end{align*} 

\item Now that the flow of $\BB$ rotates the relative velocity $v-v_*$ while preserving both the momentum $m_iv+m_jv_*$ and the energy $m_i|v|^2+m_j|v_*|^2$, the Boltzmann collision manifold $\mathcal{M}_{ij}(v,v_*)$ (see the definition in \S~\ref{mp}) coincides with the collection of points reachable by flowing along $\BB$, namely 
\begin{align*}
\qquad\ \ 
\left\{\begin{aligned}
\begin{pmatrix}v(t)\\v_*(t)\end{pmatrix}\!:
\begin{pmatrix}\dot{v}(t)\\ \dot{v}_*(t)\end{pmatrix}\!= \BB\left(v(t)-v_*(t)\right),\,\begin{pmatrix}v(0)\\v_*(0)\end{pmatrix}=\begin{pmatrix}v\\v_*\end{pmatrix},\,t\in\R,\,1\le k\le M 
\end{aligned}\right\}. 
\end{align*}

\item\label{property-comm} The operator $\LL$ commutes with the mass-weighted Laplacian 
\begin{align*}
\Delta_{m_i,m_j}:=\frac{\Delta_v}{m_i}+\frac{\Delta_{v_*}}{m_j},  
\end{align*}
that is, 
\begin{align*}
\big[\,\Delta_{m_i,m_j},\,\LL\,\big]=0. 
\end{align*}
\end{enumerate}

\subsection{Fisher information dissipation}
For convenience, we introduce the notation
\begin{align}\label{mm}
\begin{aligned}
\nabla_{\sqrt{m}_i,\sqrt{m}_j}&:=(\nabla_{\sqrt{m}_iv},\nabla_{\sqrt{m}_jv_*}),\\
\nabla_{m_i,m_j}&:=(\nabla_{m_iv},\nabla_{m_jv_*}). 
\end{aligned}
\end{align}
Together with the vector fields defined in Subsection~\ref{vf}, we aim to establish the following formula for the Fisher information dissipation along the Landau system. 

\begin{proposition}\label{diss-multiL}
Let $\ff$ be a solution to \eqref{multi-L}. Then, 
\begin{align*}
\frac{\dif}{\dif t}\,\II(\ff) =\sum_{i,j=1}^S\sum_{k=1}^M c_{ij}\! \int_{\R^{2d}}\! - F_{ij}\,\big|\nabla_{\sqrt{m}_i,\sqrt{m}_j}(\sqrt{\alpha}_{ij}\;\!\LL\Psi_{ij})\big|^2  + \frac{m_i+m_j}{m_i\;\!m_j}\frac{(\alpha_{ij}')^2}{4\alpha_{ij}} F_{ij}\,\big|\LL\Psi_{ij}\big|^2.
\end{align*}
\end{proposition}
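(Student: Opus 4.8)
The plan is to transform the tensorized formula \eqref{I111} into the stated identity in three moves: (a) collapse the matrix-weighted quadratic forms into scalar directional derivatives via \eqref{AA}; (b) simplify the resulting $\LL$-derivatives using the invariance properties of Subsection~\ref{vf}; and (c) carry out one integration by parts that exploits $F_{ij}=e^{\Psi_{ij}}$.

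First I would reduce the matrix weighting. For any differentiable $g$, the definition \eqref{LLk} gives $\bb_k\cdot(\nabla_{m_iv}-\nabla_{m_jv_*})g=\LL g$, so the representation \eqref{AA} yields
\[
A\,(\nabla_{m_iv}-\nabla_{m_jv_*})\Psi_{ij}\cdot(\nabla_{m_iv}-\nabla_{m_jv_*})g=\sum_{k=1}^M(\LL\Psi_{ij})(\LL g).
\]
Taking $g=\Delta_{m_i,m_j}\Psi_{ij}$ and $g=\frac{|\nabla_v\Psi_{ij}|^2}{m_i}+\frac{|\nabla_{v_*}\Psi_{ij}|^2}{m_j}=|\nabla_{\sqrt{m}_i,\sqrt{m}_j}\Psi_{ij}|^2$ turns \eqref{I111} into $\sum_{i,j,k}c_{ij}\int\alpha_{ij}F_{ij}(\LL\Psi_{ij})\LL(\Delta_{m_i,m_j}\Psi_{ij})+\frac{\alpha_{ij}}{2}F_{ij}(\LL\Psi_{ij})\LL(|\nabla_{\sqrt{m}_i,\sqrt{m}_j}\Psi_{ij}|^2)$. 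From here I fix $(i,j,k)$, abbreviate $h:=\LL\Psi_{ij}$, write $\nabla:=\nabla_{\sqrt{m}_i,\sqrt{m}_j}$ and $\Delta:=\Delta_{m_i,m_j}$, and drop the remaining subscripts on $F,\alpha,\Psi$. It then suffices to identify $\alpha Fh\,\LL(\Delta\Psi)+\tfrac{\alpha}{2}Fh\,\LL(|\nabla\Psi|^2)$ with $-F|\nabla(\sqrt{\alpha}\,h)|^2+|\nabla\sqrt{\alpha}|^2Fh^2$, which matches the proposition once one notes $|\nabla r|^2=\frac{1}{m_i}+\frac{1}{m_j}$ for $r=|v-v_*|$, whence $|\nabla\sqrt{\alpha}|^2=\frac{(\alpha')^2}{4\alpha}|\nabla r|^2=\frac{m_i+m_j}{m_i\,m_j}\frac{(\alpha')^2}{4\alpha}$.

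Next I would simplify the two $\LL$-derivatives. Property \ref{property-comm} gives $\LL(\Delta\Psi)=\Delta h$ immediately. For the second, since $\LL$ is a derivation, $\LL(|\nabla\Psi|^2)=2\nabla\Psi\cdot\LL(\nabla\Psi)$; the weighted analogue of property \ref{property-skew} — valid because $\nabla=\nabla_{\sqrt{m}_i,\sqrt{m}_j}$ is exactly the scaling under which $\LL$ has a linear, divergence-free coefficient field with skew-symmetric Jacobian — lets me write $\LL(\nabla\Psi)=\nabla h-(D\BB)\nabla\Psi$ with $D\BB$ skew. The quadratic form $\nabla\Psi\cdot(D\BB)\nabla\Psi$ then vanishes, so $\LL(|\nabla\Psi|^2)=2\nabla\Psi\cdot\nabla h$ and the second term becomes $\alpha Fh\,\nabla\Psi\cdot\nabla h=\alpha h\,\nabla h\cdot\nabla F$, using $\nabla F=F\nabla\Psi$.

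Finally I integrate the first term by parts via $\int g\,\Delta h=-\int\nabla g\cdot\nabla h$ with $g=\alpha Fh$. Expanding $\nabla(\alpha Fh)=hF\nabla\alpha+h\alpha\nabla F+\alpha F\nabla h$, the middle contribution $-\int\alpha h\,\nabla F\cdot\nabla h$ cancels exactly the second term from the previous step, leaving $-\int\alpha F|\nabla h|^2-\int hF\nabla\alpha\cdot\nabla h$. Writing $hF\nabla\alpha\cdot\nabla h=\tfrac{1}{2}F\nabla\alpha\cdot\nabla(h^2)$ and re-expanding $|\nabla(\sqrt{\alpha}h)|^2=h^2|\nabla\sqrt{\alpha}|^2+\tfrac{1}{2}\nabla\alpha\cdot\nabla(h^2)+\alpha|\nabla h|^2$ shows this equals $-F|\nabla(\sqrt{\alpha}h)|^2+|\nabla\sqrt{\alpha}|^2Fh^2$, as required; summing over $(i,j,k)$ finishes the proof. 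I expect the main obstacle to be the bookkeeping at the commutator step: one must verify that the mass weighting $\nabla_{\sqrt{m}_i,\sqrt{m}_j}$ is precisely what keeps both $\Delta_{m_i,m_j}=\nabla\cdot\nabla$ and the skew-symmetry of $D\BB$ intact across distinct species $m_i\neq m_j$, which is what makes the cross term drop and lets the single integration by parts close.
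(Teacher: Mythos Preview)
Your proof is correct and follows essentially the same route as the paper: both start from \eqref{I111}, pass to the vector-field form \eqref{I111-k}, invoke the commutation of $\LL$ with $\Delta_{m_i,m_j}$ (Property~\ref{property-comm}) together with the skew-symmetry of the Jacobian (Property~\ref{property-skew}, which indeed becomes exact precisely in the $\sqrt{m}$-weighted coordinates you identify), and close with one integration by parts. Your ordering---applying the skew-symmetry first to reduce the $\LL(|\nabla_{\sqrt{m}_i,\sqrt{m}_j}\Psi_{ij}|^2)$ term, then integrating by parts and expanding $|\nabla(\sqrt{\alpha}\,h)|^2$ directly---is in fact a bit more economical than the paper's, which splits off a $[\sqrt{\alpha}_{ij},\Delta_{m_i,m_j}]$ commutator and defers the skew-symmetry cancellation to the end.
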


\begin{proof}
Invoking \eqref{AA} and \eqref{LLk}, we recast the expression~\eqref{I111} for the derivative of the Fisher information, which is a reformulation of Lemma~\ref{I11}, as 
\begin{align}\label{I111-k}
\begin{aligned}
\frac{\dif}{\dif t}\,\II(\ff)
=\sum_{i,j=1}^S\sum_{k=1}^M c_{ij}\int_{\R^{2d}} &\  \alpha_{ij}\;\!F_{ij}\;\!\LL\Psi_{ij}\,\LL\,\Delta_{m_i,m_j}\,\Psi_{ij} \\
&+\frac{\alpha_{ij}}{2}\,F_{ij}\;\!\LL\Psi_{ij}\,\LL\left(\frac{|\nabla_v\Psi_{ij}|^2}{m_i} + \frac{|\nabla_{v_*}\Psi_{ij}|^2}{m_j}\right). 
\end{aligned}
\end{align}
Our focus now turns to the first term on the right-hand side above, that is, 
\begin{align*}
T_{ij}^k &:= \int_{\R^{2d}} \alpha_{ij}\;\!F_{ij}\;\!\LL\Psi_{ij}\,\LL\,\Delta_{m_i,m_j}\,\Psi_{ij}. 
\end{align*}
In view of the commutation relation stated in Property~\ref{property-comm} above, it can be rewritten as 
\begin{align*}
T_{ij}^k =\int_{\R^{2d}}\sqrt{\alpha}_{ij}\;\!F_{ij}\,\LL\Psi_{ij}\,\Delta_{m_i,m_j} (\sqrt{\alpha}_{ij}\;\!\LL\Psi_{ij}) + \sqrt{\alpha}_{ij}\;\!F_{ij}\,\LL\Psi_{ij}\,\big[\sqrt{\alpha}_{ij},\Delta_{m_i,m_j}\big](\LL\Psi_{ij}).
\end{align*}
Upon integrating by parts term by term, and using the abbreviations in \eqref{mm}, we obtain 
\begin{align*}
T_{ij}^k=-\!\int_{\R^{2d}}& F_{ij}\, \big|\nabla_{\sqrt{m}_i,\sqrt{m}_j}(\sqrt{\alpha}_{ij}\;\!\LL\Psi_{ij})\big|^2
+ \sqrt{\alpha}_{ij}\;\!\LL\Psi_{ij}\,\nabla_{m_i,m_j}F_{ij}\cdot\nabla (\sqrt{\alpha}_{ij}\;\!\LL\Psi_{ij}) \\ 
 & + \nabla_{m_i,m_j}(\alpha_{ij}\;\!F_{ij}\,\LL\Psi_{ij})\cdot\nabla\LL\Psi_{ij}
-\nabla_{m_i,m_j}(\sqrt{\alpha}_{ij}\;\!F_{ij}\,\LL\Psi_{ij})\cdot\nabla(\sqrt{\alpha}_{ij}\;\!\LL\Psi_{ij}). 
\end{align*}
The last three terms on the right-hand side above simplify to
\begin{align*}
-\left(\frac{|\nabla_v\sqrt{\alpha}_{ij}|^2}{m_i}+\frac{|\nabla_{v_*}\sqrt{\alpha}_{ij}|^2}{m_j}\right)F_{ij}\,\big|\LL\Psi_{ij}\big|^2 + \alpha_{ij}\;\!\LL\Psi_{ij}\,\nabla_{m_i,m_j}F_{ij}\cdot\nabla\LL\Psi_{ij}, 
\end{align*}
by applying the elementary identity due to the product rule, 
\begin{align*}
s_0s_2 Ds_1\cdot D(s_0s_2) +  D(s_0^2s_1s_2)\cdot Ds_2 - D(s_0s_1s_2)\cdot D(s_0s_2)\\ = - s_1 s_2^2|Ds_0|^2 + s_0^2s_2(Ds_1\cdot Ds_2) &, 
\end{align*}
with the substitution of the scalar functions $(s_0,s_1,s_2)=(\sqrt{\alpha}_{ij},F_{ij},\LL\Psi_{ij})$, and with the differential operator $D=\nabla_v$ or $\nabla_{v_*}$.  
By noticing $|\nabla_v\sqrt{\alpha}_{ij}|^2=|\nabla_{v_*}\sqrt{\alpha}_{ij}|^2=(\alpha_{ij}')^2/(4\alpha_{ij})$, we arrive at 
\begin{align*}
T_{ij}^k = \int_{\R^{2d}}& -F_{ij}\, \big|\nabla_{\sqrt{m}_i,\sqrt{m}_j}(\sqrt{\alpha}_{ij}\;\!\LL\Psi_{ij})\big|^2 \\ 
&+ \frac{m_i+m_j}{m_i\;\!m_j}\frac{(\alpha_{ij}')^2}{4\alpha_{ij}} F_{ij}\,\big|\LL\Psi_{ij}\big|^2 -\alpha_{ij}\;\!\LL\Psi_{ij}\,\nabla_{m_i,m_j}F_{ij}\cdot\nabla\LL\Psi_{ij}. 
\end{align*}
We then deduce from this identity and \eqref{I111-k} that 
\begin{align*}
\frac{\dif}{\dif t}\,\II(\ff) =\sum_{i,j=1}^S\sum_{k=1}^M c_{ij} \int_{\R^{2d}} - F_{ij}\,\big|\nabla_{\sqrt{m}_i,\sqrt{m}_j}(\sqrt{\alpha}_{ij}\;\!\LL\Psi_{ij})\big|^2  +R_{ij}^k,
\end{align*}
where the remainder terms are collected as 
\begin{align}\label{Rem-ijk}
R_{ij}^k:=&\, \frac{m_i+m_j}{m_i\;\!m_j}\frac{(\alpha_{ij}')^2}{4\alpha_{ij}} F_{ij}\,\big|\LL\Psi_{ij}\big|^2\nonumber\\ &-\alpha_{ij}\;\!\LL\Psi_{ij}\,\nabla_{m_i,m_j}F_{ij}\cdot\nabla\LL\Psi_{ij} +\frac{\alpha_{ij}}{2}\,F_{ij}\;\!\LL\Psi_{ij}\,\LL\left(\frac{|\nabla_v\Psi_{ij}|^2}{m_i} + \frac{|\nabla_{v_*}\Psi_{ij}|^2}{m_j}\right). 
\end{align} 
By the definition of $\Psi_{ij}$ and the skew-symmetry of $D\BB$ (see also  Property~\ref{property-skew}), we have 
\begin{align*}
\nabla_{m_i,m_j}F_{ij}\cdot\nabla\LL\Psi_{ij}
&=F_{ij}\left(\frac{\nabla_v\Psi_{ij}}{m_i},\frac{\nabla_{v_*}\Psi_{ij}}{m_j}\right)\cdot\nabla\LL\Psi_{ij}\\
&=F_{ij}\left(\frac{\nabla_v\Psi_{ij}}{m_i},\frac{\nabla_{v_*}\Psi_{ij}}{m_j}\right) \cdot\LL\nabla\Psi_{ij} \\
&=\frac{1}{2}\,\LL\left(\frac{|\nabla_v\Psi_{ij}|^2}{m_i} + \frac{|\nabla_{v_*}\Psi_{ij}|^2}{m_j}\right),  
\end{align*} 
which implies that the last two terms on the right-hand side of \eqref{Rem-ijk} cancel identically. We therefore conclude the formula as claimed. 
\end{proof}

\section{Spherical geometric criterion}\label{section-inequality}

\subsection{Change of variables}
Let us fix the pair of indices $(i,j)$ with $i,j=1,\ldots,S$. Recall the motivation underlying the microscopic parametrization described in \S~\ref{mp}. We consider the change of the variables from $(v,v_*)\in\R^{2d}$ to $(z,r,\sigma)\in\R^d\times\R_+\times\Sp^{d-1}$, specified by 
\begin{align}\label{change}
\left\{\begin{aligned}
&\; z=\frac{m_i\;\!v+m_j\;\!v_*}{m_i+m_j}\\
&\ r\;\!\sigma = v-v_*
\end{aligned}\right. 
\quad\Longleftrightarrow\quad
\left\{\begin{aligned}
&\ v=z+\frac{m_j}{m_i+m_j}\;\!r\;\!\sigma\\
&\ v_*=z-\frac{m_i}{m_i+m_j}\;\!r\;\!\sigma .
\end{aligned}\right. 
\end{align} 
In particular, $|v-v_*|=r$. Recalling \eqref{bbk}, we see that the vector $\bb_k$, with $1\le k\le M=d\left(d-1\right)/2$, takes the form
\begin{align*}
&\bb_k= r\;\!b_k, \\
&b_k:=(E_{p_kq_k}-E_{q_kp_k})\,\sigma=\sigma_{q_k}\;\!\ee_{p_k}-\sigma_{p_k}\;\!\ee_{q_k}.
\end{align*}
From the definition \eqref{LLk}, and the orthogonality $b_k\cdot\sigma=0$, it follows that 
\begin{align}\label{LLr}
\LL=\frac{m_i+m_j}{m_i\;\!m_j}\,r\;\!b_k\cdot\nabla_{r\sigma}
= \frac{m_i+m_j}{m_i\;\!m_j}\, b_k\cdot\nabla_\sigma. 
\end{align}

The change of variables further gives the integration element 
\begin{align*}
\dif v\dif v_*=r^{d-1}\dif z\dif r\dif\sigma, 
\end{align*}
and the differentiation relations 
\begin{align*}
\nabla_v&= \frac{m_i}{m_i+m_j}\;\!\nabla_z +\nabla_{r\sigma}= \frac{m_i}{m_i+m_j}\;\!\nabla_z + \sigma\;\!\partial_r+ \frac{1}{r}\,\nabla_\sigma, \\
\nabla_{v_*}&= \frac{m_j}{m_i+m_j}\;\!\nabla_z -\nabla_{r\sigma} = \frac{m_j}{m_i+m_j}\;\!\nabla_z-\sigma\;\!\partial_r-\frac{1}{r}\,\nabla_\sigma. 
\end{align*}
Here the gradient $\nabla_\sigma$ on $\Sp^{d-1}$ coincides with the orthogonal projection of the Euclidean gradient $\nabla_{\R^d}$ onto the tangent space $T_\sigma\Sp^{d-1}$. For any differentiable function $\Phi:\Sp^{d-1}\rightarrow\R$, we have $\nabla_\sigma\Phi\in T_\sigma\Sp^{d-1}$ and $\sigma\cdot\nabla_\sigma\Phi=0$. 

\subsection{Bakry-\'Emery formalism}
Following \cite{Guillen-Silvestre}, the formula of the Fisher information dissipation in Proposition~\ref{diss-multiL} fits naturally within the Bakry-\'Emery formalism (see for instance \cite{BE,BGL}), which we briefly recall for completeness. 

Since $\sum_{k=1}^M b_k\otimes b_k$ is the orthogonal projection onto $T_\sigma\Sp^{d-1}$, for any smooth function, we have  $\Phi:\Sp^{d-1}\rightarrow\R$,   
\begin{align*}
\nabla_\sigma\Phi=\sum_{k=1}^M b_k\,(b_k\cdot\nabla_\sigma\Phi).
\end{align*}
It follows that 
\begin{align*}
|\nabla_\sigma\Phi|^2 &= \sum_{k=1}^M \left|b_k\cdot\nabla_\sigma\Phi\right|^2,\\
\Delta_\sigma\Phi &= \sum_{k=1}^M b_k\cdot\nabla_\sigma(b_k\cdot\nabla_\sigma\Phi). 
\end{align*}
Moreover, by noticing that $\langle\nabla_\sigma^2\Phi(\cdot),\nabla_\sigma b_k(\cdot)\rangle$ is a skew-symmetric bilinear form on $T_\sigma\Sp^{d-1}$, and $\Sp^{d-1}$ with the standard round metric $g_{\rm\;\!round}$ has constant Ricci curvature $(d-2)\;\!g_{\rm\;\!round}$, we have
\begin{align*}
\sum_{k=1}^M |\nabla_\sigma(b_k\cdot\nabla_\sigma\Phi)|^2 
&= \sum_{k=1}^M \big|\nabla_\sigma^2\Phi[b_k,\cdot]\big|^2 + |(\nabla_\sigma b_k)\nabla_\sigma\Phi|^2\\
&=\big|\nabla_\sigma^2\Phi\big|_{\rm HS}^2 +\mathrm{Ric}\;\!(\nabla_\sigma\Phi,\nabla_\sigma\Phi)
=\big|\nabla_\sigma^2\Phi\big|_{\rm HS}^2 +(d-2)|\nabla_\sigma\Phi|^2,  
\end{align*}
where $|\cdot|_{\rm HS}$ denotes the Hilbert-Schmidt norm. 

The above identities can be interpreted in the language of Bakry-\'Emery's $\Gamma$-calculus by setting the carr\'e du champ operator $\Gamma$ and its iterated version $\Gamma_2$ on $\Sp^{d-1}$,
\begin{align*}
\Gamma(\Phi,\Psi)&:= \frac{1}{2}\left(\Delta_\sigma(\Phi\;\!\Psi) -\Psi\;\!\Delta_\sigma\Phi -\Phi\;\!\Delta_\sigma\Psi\right), \\
\Gamma_2(\Phi,\Psi)&:= \frac{1}{2}\left(\Delta_\sigma\Gamma(\Phi,\Psi) -\Gamma(\Phi,\Delta_\sigma\Psi) -\Gamma(\Delta_\sigma\Phi,\Psi)\right).
\end{align*}
By abbreviating $\Gamma(\Phi):=\Gamma(\Phi,\Phi)$ and $\Gamma_2(\Phi):=\Gamma_2(\Phi,\Phi)$, we have 
\begin{align}\label{BE1}
\begin{aligned}
\Gamma(\Phi)&=|\nabla_\sigma\Phi|^2=\sum_{k=1}^M \left|b_k\cdot\nabla_\sigma\Phi\right|^2,\\
\Gamma_2(\Phi)&=\frac{1}{2}\left(\Delta_\sigma\Gamma(\Phi)-2\,\Gamma(\Phi,\Delta_\sigma\Phi)\right), 
\end{aligned}
\end{align}
from which Bochner's formula implies 
\begin{align}\label{BE2}
\Gamma_2(\Phi)=\big|\nabla_\sigma^2\Phi\big|_{\rm HS}^2 +(d-2)|\nabla_\sigma\Phi|^2
=\sum_{k=1}^M |\nabla_\sigma(b_k\cdot\nabla_\sigma\Phi)|^2 .
\end{align}

\subsection{Geometric decomposition}
Combining the parametrization with the Bakry-\'Emery reformulation introduced above yields the following equivalent statement of Proposition~\ref{diss-multiL}. 

\begin{proposition}\label{Fisher-decom}
Let $\ff$ be a solution to \eqref{multi-L}. Then, 
\begin{align}
\frac{\dif}{\dif t}\,\II(\ff) = \sum_{i,j=1}^S\tilde{c}_{ij}\left( - \Dp^{ij} -\Dr^{ij}- \Ds^{ij} + \Rs^{ij} \right), 
\end{align}
where $\tilde{c}_{ij}:=c_{ij}\,(m_i\;\!m_j)^{-3}\,(m_i+m_j)^3$, and 
\begin{align*}
&\Dp^{ij} := \frac{m_i\;\!m_j}{(m_i+m_j)^2}\int_{\R^{2d}} \alpha_{ij}\;\! F_{ij}\left|\nabla_z\nabla_\sigma\left(\log F_{ij}\right)\right|^2 r^{d-1}\dif z\dif r\dif\sigma, \\
&\Dr^{ij} := \int_{\R^{2d}} F_{ij}\left|\partial_r\!\left[\sqrt{\alpha}_{ij}\;\!\nabla_\sigma\left(\log F_{ij}\right)\right]\right|^2 r^{d-1}\dif z\dif r\dif\sigma, \\
&\Ds^{ij} := \int_{\R^d\times\R_+} \alpha_{ij}\;\!r^{d-3}\left( \int_{\Sp^{d-1}} F_{ij}\, \Gamma_2(\log F_{ij}) \dif\sigma\right)\dif z\dif r,\\
&\Rs^{ij} := \frac{\gamma_{ij}^2}{4} \int_{\R^d\times\R_+} \alpha_{ij}\;\!r^{d-3}\left( \int_{\Sp^{d-1}} F_{ij}\, \Gamma(\log F_{ij}) \dif\sigma\right)\dif z\dif r. 
\end{align*}
\end{proposition}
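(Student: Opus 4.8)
The plan is to take the identity of Proposition~\ref{diss-multiL} as the starting point and push it through the change of variables \eqref{change}, reading off the four terms by their dependence on the new coordinates $(z,r,\sigma)$. Three facts make the reduction go through: under \eqref{change} the integration element becomes $r^{d-1}\dif z\dif r\dif\sigma$; the directional derivative $\LL$ collapses to the purely angular operator $\frac{m_i+m_j}{m_i\,m_j}\,b_k\cdot\nabla_\sigma$ of \eqref{LLr}; and $\sqrt{\alpha}_{ij}=r^{\gamma_{ij}/2}$ depends on $r$ alone, while $b_k$ depends on $\sigma$ alone.

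First I would treat the dissipative term $-F_{ij}\,\big|\nabla_{\sqrt{m}_i,\sqrt{m}_j}(\sqrt{\alpha}_{ij}\LL\Psi_{ij})\big|^2$. Writing $a=m_i/(m_i+m_j)$ and $b=m_j/(m_i+m_j)$, the chain rule records $\nabla_v=a\nabla_z+\nabla_{r\sigma}$ and $\nabla_{v_*}=b\nabla_z-\nabla_{r\sigma}$, so for any scalar $g=g(z,r,\sigma)$,
\begin{align*}
\frac{|\nabla_v g|^2}{m_i}+\frac{|\nabla_{v_*}g|^2}{m_j}=\Big(\tfrac{a^2}{m_i}+\tfrac{b^2}{m_j}\Big)|\nabla_z g|^2+2\Big(\tfrac{a}{m_i}-\tfrac{b}{m_j}\Big)\nabla_z g\cdot\nabla_{r\sigma}g+\Big(\tfrac{1}{m_i}+\tfrac{1}{m_j}\Big)|\nabla_{r\sigma}g|^2.
\end{align*}
The decisive point is $a/m_i=b/m_j=1/(m_i+m_j)$, so the cross term vanishes identically; this is exactly where the mass weighting in $\II$ earns its keep. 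Combined with $|\nabla_{r\sigma}g|^2=|\partial_r g|^2+r^{-2}|\nabla_\sigma g|^2$, the squared mass-weighted gradient splits without interference into a $z$-part of weight $1/(m_i+m_j)$, and radial and angular parts each of weight $(m_i+m_j)/(m_i\,m_j)$.

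Next I would insert $g=\sqrt{\alpha}_{ij}\LL\Psi_{ij}=\sqrt{\alpha}_{ij}\,\tfrac{m_i+m_j}{m_i\,m_j}\,b_k\cdot\nabla_\sigma\Psi_{ij}$ into each piece, pulling the $r$-only factor $\sqrt{\alpha}_{ij}$ and the $\sigma$-only factor $b_k$ through the derivatives and summing over $k$. In the $z$-piece the derivatives commute and $\sum_k|b_k\cdot\nabla_\sigma\nabla_z\Psi_{ij}|^2=|\nabla_z\nabla_\sigma\Psi_{ij}|^2$ by applying \eqref{BE1} componentwise, producing $\Dp^{ij}$. In the radial piece $\partial_r(\sqrt{\alpha}_{ij}\nabla_\sigma\Psi_{ij})$ is tangent to the sphere, so $\sum_k b_k\otimes b_k$ acts as the identity and one recovers $\Dr^{ij}$. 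In the angular piece $\sum_k|\nabla_\sigma(b_k\cdot\nabla_\sigma\Psi_{ij})|^2=\Gamma_2(\Psi_{ij})$ by \eqref{BE2}, giving $\Ds^{ij}$. The second term of Proposition~\ref{diss-multiL} is handled directly: with $\alpha_{ij}=r^{\gamma_{ij}}$ one has $(\alpha_{ij}')^2/(4\alpha_{ij})=\tfrac{\gamma_{ij}^2}{4}\alpha_{ij}r^{-2}$, while $\sum_k|\LL\Psi_{ij}|^2=\big(\tfrac{m_i+m_j}{m_i\,m_j}\big)^2\Gamma(\Psi_{ij})$ by \eqref{BE1}, which furnishes $\Rs^{ij}$.

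The remainder is bookkeeping of constants. For the radial, angular and curvature pieces the weight $(m_i+m_j)/(m_i\,m_j)$ combines with the two further factors of the same ratio coming from $\LL$ to give $\big((m_i+m_j)/(m_i\,m_j)\big)^3$, which times $c_{ij}$ is exactly $\tilde c_{ij}$; the factors $r^{d-1}\cdot r^{-2}=r^{d-3}$ match the integrands of $\Ds^{ij}$ and $\Rs^{ij}$. The $z$-piece carries instead the weight $1/(m_i+m_j)$, and the residual constant $m_i\,m_j/(m_i+m_j)^2$ is precisely the prefactor displayed in $\Dp^{ij}$. The only genuinely delicate step is the orthogonal decomposition above, specifically the cancellation of the $z$–$(r\sigma)$ cross term; everything else is an application of the already-established identities \eqref{BE1}–\eqref{BE2} together with careful tracking of which factors depend on which coordinate.
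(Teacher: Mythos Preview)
Your proposal is correct and follows essentially the same route as the paper: start from Proposition~\ref{diss-multiL}, split the mass-weighted squared gradient under the change of variables \eqref{change} into $z$-, $r$-, and $\sigma$-parts (the paper states the diagonal identity $\tfrac{1}{m_i}|\nabla_v\Phi|^2+\tfrac{1}{m_j}|\nabla_{v_*}\Phi|^2=\tfrac{1}{m_i+m_j}|\nabla_z\Phi|^2+\tfrac{m_i+m_j}{m_i m_j}|\nabla_{r\sigma}\Phi|^2$ directly, whereas you spell out the vanishing of the cross term), then use \eqref{LLr}, \eqref{BE1}, \eqref{BE2} and the bookkeeping of constants to identify the four pieces. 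There is no substantive difference in strategy or in the ingredients used.
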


\begin{proof}
Given any differentiable function $\Phi:\R^{2d}\to\R$, under the change of variables \eqref{change}, the weighted squared gradient decomposes as
\begin{align*}
\frac{1}{m_i}|\nabla_v\Phi|^2 +\frac{1}{m_j}|\nabla_{v_*}\Phi|^2 
=\frac{1}{m_i+m_j}\,|\nabla_z\Phi|^2 + \frac{m_i+m_j}{m_i\;\!m_j}\,|\nabla_{r\sigma}\Phi|^2. 
\end{align*}
Here we know further from $\sigma\cdot\nabla_\sigma\Phi=0$ that 
\begin{align*}
|\nabla_{r\sigma}\Phi|^2=|\partial_r\Phi|^2 +r^{-2}|\nabla_\sigma\Phi|^2. 
\end{align*}
Applying these two formulas with $\Phi=\sqrt{\alpha}_{ij}\;\!\LL\Psi_{ij}=\sqrt{\alpha}_{ij}\;\!\LL\log F_{ij}$ to Proposition~\ref{diss-multiL} yields a decomposition of the Fisher information dissipation into parallel, radial, and spherical parts, 
\begin{align*}
\frac{\dif}{\dif t}\,\II(\ff) = \sum_{i,j=1}^S -D_{{\rm parallel},\;\!c}^{ij} -D_{{\rm radial},\;\!c}^{ij} - D_{{\rm spherical},\;\!c}^{ij} + R_{{\rm\;\!spherical},\;\!c}, 
\end{align*}
where we defined
\begin{align*}
&D_{{\rm parallel},\;\!c}^{ij} := \sum_{k=1}^M\int_{\R^{2d}} \frac{c_{ij}\,\alpha_{ij}}{m_i+m_j}\, F_{ij}\, \big|\nabla_z\LL\log F_{ij}\big|^2 \dif v\dif v_*, \\
&D_{{\rm radial},\;\!c}^{ij} := \sum_{k=1}^M \int_{\R^{2d}} \frac{c_{ij}\left(m_i+m_j\right)\,}{m_i\;\!m_j}\,F_{ij}\, \big|\partial_r\big(\sqrt{\alpha}_{ij}\;\!\LL\log F_{ij}\big)\big|^2 \dif v\dif v_*, \\
&D_{{\rm spherical},\;\!c}^{ij}:= \sum_{k=1}^M \int_{\R^{2d}} \frac{c_{ij}\left(m_i+m_j\right)\alpha_{ij}}{m_i\;\!m_j\;\!r^2}\, F_{ij}\,\big|\nabla_\sigma\LL\log F_{ij}\big|^2 \dif v\dif v_*,\\
&R_{{\rm\;\!spherical},\;\!c}:= \sum_{k=1}^M \int_{\R^{2d}} \frac{c_{ij}\left(m_i+m_j\right)}{m_i\;\!m_j}\frac{(\alpha_{ij}')^2}{4\alpha_{ij}} \,F_{ij}\,\big|\LL\log F_{ij}\big|^2 \dif v\dif v_*.   
\end{align*}
Taking the definition of $\alpha_{ij}$ into account, we see that 
\begin{align*}
\frac{(\alpha_{ij}')^2}{\alpha_{ij}} = \frac{\gamma_{ij}^2\;\!\alpha_{ij}}{r^2}. 
\end{align*}
By the change of variables \eqref{change} and the identity \eqref{LLr}, we arrive at  
\begin{align*}
&D_{{\rm parallel},\;\!c}^{ij} = \tilde{c}_{ij}\, \frac{m_i\;\!m_j}{(m_i+m_j)^2}\int_{\R^{2d}} \alpha_{ij}\;\! F_{ij}\left|\nabla_z\nabla_\sigma\left(\log F_{ij}\right)\right|^2 r^{d-1}\dif z\dif r\dif\sigma, \\
&D_{{\rm radial},\;\!c}^{ij} = \tilde{c}_{ij} \int_{\R^{2d}} F_{ij}\left|\partial_r\!\left[\sqrt{\alpha}_{ij}\;\!\nabla_\sigma\left(\log F_{ij}\right)\right]\right|^2 r^{d-1}\dif z\dif r\dif\sigma, \\
&D_{{\rm spherical},\;\!c}^{ij} = \tilde{c}_{ij}\;\! \sum_{k=1}^M \int_{\R^{2d}} \alpha_{ij}\;\! F_{ij} \left|\nabla_\sigma\left[b_k\cdot\nabla_\sigma\left(\log F_{ij}\right)\right]\right|^2 r^{d-3} \dif z\dif r\dif\sigma,\\
&R_{{\rm\;\!spherical},\;\!c} = \tilde{c}_{ij}\;\!\frac{\gamma_{ij}^2}{4} \int_{\R^{2d}} \alpha_{ij}\;\!F_{ij}\left|\nabla_\sigma(\log F_{ij})\right|^2 r^{d-3}\dif z\dif r\dif\sigma, 
\end{align*}
where we set $\tilde{c}_{ij}:=c_{ij}\,(m_i\;\!m_j)^{-3}\,(m_i+m_j)^3$. According to the Bakry-\'Emery formalism (see \eqref{BE1} and \eqref{BE2}), one finds  
\begin{align*}
D_{{\rm spherical},\;\!c}^{ij} &=  \tilde{c}_{ij}\int_{\R^d\times\R_+} \alpha_{ij}\;\!r^{d-3}\left( \int_{\Sp^{d-1}} F_{ij}\, \Gamma_2(\log F_{ij}) \dif\sigma\right)\dif z\dif r,\\
R_{{\rm\;\!spherical},\;\!c}^{ij} &=  \tilde{c}_{ij}\;\!\frac{\gamma_{ij}^2}{4} \int_{\R^d\times\R_+} \alpha_{ij}\;\!r^{d-3}\left( \int_{\Sp^{d-1}} F_{ij}\, \Gamma(\log F_{ij}) \dif\sigma\right)\dif z\dif r. 
\end{align*}
This concludes the reformulation, with the constant $ \tilde{c}_{ij}$ extracted. 
\end{proof}

\subsection{Functional inequalities on the sphere}
In the light of Proposition~\ref{Fisher-decom}, the nonnegativity of the Fisher information dissipation can be reduced to a spherical geometric criterion. 

The following lemma was proved in \cite{Guillen-Silvestre, Ji}. In the absence of the antipodal symmetry condition, the inequality below is sharp and classical; see \cite{BGL}.  

\begin{lemma}
For any $C^2$-function $\varphi:\Sp^{d-1} \to\R_+$, we have 
\begin{align}\label{sym}
\int_{\Sp^{d-1}} \varphi\,\Gamma_2(\log\varphi)
\ge (d-1)\int_{\Sp^{d-1}}\varphi\,\Gamma(\log\varphi). 
\end{align}
If additionally $\varphi(\sigma) =\varphi(-\sigma)$, then we have 
\begin{align}\label{asym}
\int_{\Sp^{d-1}} \varphi\,\Gamma_2(\log\varphi)
\ge \left(d+3-\frac{1}{d-1}\right)\int_{\Sp^{d-1}}\varphi\,\Gamma(\log\varphi). 
\end{align}
\end{lemma}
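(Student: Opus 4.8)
The plan is to set $u:=\log\varphi$ (legitimate since $\varphi>0$) and, throughout, write $\int$ for $\int_{\Sp^{d-1}}(\cdot)\dif\sigma$. The first move is to invoke Bochner's formula \eqref{BE2}, $\Gamma_2(u)=|\nabla_\sigma^2 u|_{\rm HS}^2+(d-2)\,\Gamma(u)$ with $\Gamma(u)=|\nabla_\sigma u|^2$, so that both claims become weighted Hessian estimates: \eqref{sym} is equivalent to $\int\varphi\,|\nabla_\sigma^2 u|_{\rm HS}^2\ge\int\varphi\,|\nabla_\sigma u|^2$, and \eqref{asym} to the stronger $\int\varphi\,|\nabla_\sigma^2 u|_{\rm HS}^2\ge\big(5-\tfrac{1}{d-1}\big)\int\varphi\,|\nabla_\sigma u|^2$, using that $d+3-\tfrac1{d-1}-(d-2)=5-\tfrac1{d-1}$.

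Next I would record two algebraic ingredients on the closed manifold $\Sp^{d-1}$. Abbreviating $w:=\Delta_\sigma u$ and $p:=|\nabla_\sigma u|^2$, integration by parts (using $\Delta_\sigma\varphi=\varphi(w+p)$ and the definitions in \eqref{BE1}) yields the closed-form identity $\int\varphi\,\Gamma_2(u)=\int\varphi\,w^2+\tfrac32\int\varphi\,wp+\tfrac12\int\varphi\,p^2$, together with the global constraint $\int\varphi\,w=-\int\varphi\,p$ coming from $\int\Delta_\sigma\varphi=0$. Splitting the Hessian into trace and trace-free parts, $|\nabla_\sigma^2 u|_{\rm HS}^2=\tfrac{w^2}{d-1}+\big|\nabla_\sigma^2 u-\tfrac{w}{d-1}\,\mathrm{Id}\big|_{\rm HS}^2$, the first inequality \eqref{sym} then follows from the pointwise trace bound $|\nabla_\sigma^2 u|_{\rm HS}^2\ge\tfrac{w^2}{d-1}$ (i.e.\ discarding the nonnegative trace-free part) once one proves the scalar inequality $\int\varphi\,(\Delta_\sigma u)^2\ge(d-1)\int\varphi\,|\nabla_\sigma u|^2$.

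This scalar inequality is the classical, sharp one: it is exactly the statement that the Fisher information $\int|\nabla_\sigma\varphi|^2/\varphi$ decays under the heat semigroup on $\Sp^{d-1}$ at the rate $2(d-1)$ fixed by the first nonzero eigenvalue $\lambda_1=d-1$ of $-\Delta_\sigma$, which is a form of the sharp curvature-dimension estimate; I would cite \cite{BGL}. Sharpness is seen by linearizing about a constant along a degree-one spherical harmonic $Y_1$, whose spherical Hessian is pure trace ($\nabla_\sigma^2 Y_1=-Y_1\,g_{\rm round}$), so that the trace-free term discarded above vanishes to leading order and the ratio tends to $\lambda_1=d-1$; this confirms that nothing is lost in the reduction for \eqref{sym}.

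The hard part will be \eqref{asym}. Under $\varphi(\sigma)=\varphi(-\sigma)$ the function $u$ is even and $\nabla_\sigma u$ is odd, so the degree-one harmonics drop out and the relevant gap jumps to $\lambda_2=2d$ on $\Sp^{d-1}/\{\pm I\}$. The obstruction is that the target constant $d+3-\tfrac1{d-1}$ lies \emph{strictly below} the even spectral gap $2d$ for every $d\ge3$, so the extremal configuration is genuinely nonlinear and no purely spectral argument can be sharp; concretely, discarding the trace-free Hessian as in \eqref{sym} now loses too much, since $\tfrac{2d}{d-1}<5-\tfrac1{d-1}$ for $d\ge3$. Following \cite{Guillen-Silvestre,Ji}, the remedy is to \emph{retain} the trace-free Hessian term $\int\varphi\,|\nabla_\sigma^2 u-\tfrac{w}{d-1}\mathrm{Id}|_{\rm HS}^2$ and the quartic term $\int\varphi\,p^2$, to bound the cross term $\int\varphi\,\nabla_\sigma^2 u(\nabla_\sigma u,\nabla_\sigma u)$ jointly with the antipodally improved Poincaré input, and then to optimize the resulting quadratic expression in these quantities; the value $d+3-\tfrac1{d-1}$ emerges from that optimization, and the numerical evidence noted in Section~\ref{section-inequality} indicates it cannot be improved. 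I expect the principal difficulty to be precisely this simultaneous bookkeeping: keeping the trace-free, quartic, and cross terms all sharp at once, since the extremizer is a nonlinear, degree-two-type profile for which none of the terms may be thrown away.
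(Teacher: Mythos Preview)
The paper does not give a proof of this lemma at all: it simply records that \eqref{sym} is classical (citing \cite{BGL}) and that \eqref{asym} was established in \cite{Guillen-Silvestre,Ji}. So your proposal is already more detailed than what the paper does, and for \eqref{asym} your plan --- keep the trace-free Hessian, the quartic $\int\varphi\,p^2$, and the cross term, feed in the even-function spectral gap $\lambda_2=2d$, and optimize as in \cite{Guillen-Silvestre,Ji} --- is exactly the roadmap those references carry out. On that half you and the paper agree, and the difficulties you flag are the real ones.

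There is, however, a genuine circularity in your treatment of \eqref{sym}. After Bochner you reduce \eqref{sym} to $\int\varphi\,|\nabla_\sigma^2 u|_{\rm HS}^2\ge\int\varphi\,|\nabla_\sigma u|^2$, then discard the trace-free part to reach the ``scalar inequality'' $\int\varphi\,(\Delta_\sigma u)^2\ge(d-1)\int\varphi\,|\nabla_\sigma u|^2$, and finally justify this by saying it ``is exactly the statement that the Fisher information decays under the heat semigroup at the rate $2(d-1)$.'' But that last claim is not correct: the exponential decay of $\int|\nabla_\sigma\varphi|^2/\varphi$ under the heat flow at rate $2(d-1)$ is precisely the integrated inequality $\int\varphi\,\Gamma_2(\log\varphi)\ge(d-1)\int\varphi\,\Gamma(\log\varphi)$, i.e.\ \eqref{sym} itself, not your scalar inequality. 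In fact your scalar inequality is \emph{strictly stronger} than \eqref{sym}: since $\int\varphi\,|\nabla_\sigma^2 u|_{\rm HS}^2=\tfrac{1}{d-1}\int\varphi\,(\Delta_\sigma u)^2+\int\varphi\,|\nabla_\sigma^2 u-\tfrac{\Delta_\sigma u}{d-1}\mathrm{Id}|_{\rm HS}^2$, your scalar bound implies \eqref{sym}, but not conversely. So by throwing away the nonnegative trace-free term you have passed to a harder statement, and you cannot then close the loop by invoking the weaker one. The clean way to get \eqref{sym} is exactly what \cite{BGL} does: use the pointwise curvature--dimension inequality $\Gamma_2(u)\ge(d-2)\Gamma(u)+\tfrac{1}{d-1}(\Delta_\sigma u)^2$ together with your identity $\int\varphi\,\Gamma_2(u)=\int\varphi\,w^2+\tfrac32\int\varphi\,wp+\tfrac12\int\varphi\,p^2$ and eliminate $\int\varphi\,w^2$ between them, rather than discarding the trace-free Hessian first.
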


We are now in a position to conclude the main result. 

\begin{proof}[Proof of Theorem~\ref{I}]
Fix $(z,r)\in\R^d\times\R_+$ and $i\in\{1,\ldots,S\}$. Recalling that $F_{ii}(v,v_*)=F_{ii}(v_*,v)$, the function $F_{ii}$ is antipodally symmetric in $\sigma$. Therefore, the inequality \eqref{sym} applies to the corresponding term in the decomposition of Proposition~\ref{Fisher-decom} with the choice $\varphi=F_{ii}$.

By contrast, for $i\neq j$ the function $F_{ij}$ need not enjoy this symmetry; equivalently, $F_{ij}(v,v_*)$ is not necessarily invariant under the swap $(v,v_*)\mapsto(v_*,v)$, which corresponds to $\sigma\mapsto-\sigma$. Hence \eqref{sym} cannot be invoked for a general cross-species term. In this case we instead apply the asymmetric inequality \eqref{asym} to the decomposition in Proposition~\ref{Fisher-decom}, for the same fixed $(z,r)$ and any pair $i,j\in\{1,\ldots,S\}$, with the general choice $\varphi=F_{ij}$. This yields the admissible range
\begin{align*}
|\gamma_{ij}|\le \sqrt{4(d-1)}.
\end{align*}
\end{proof}

\end{document}